\newtheorem{thm}{Theorem}
\newtheorem{lem}[thm]{Lemma}
\newtheorem{cor}[thm]{Corollary}
\newtheorem{dfn}[thm]{Definition}
\newtheorem{ques}[thm]{Question}
\theoremstyle{remark}
\newtheorem{ex}{Example}
\renewcommand{\rm}[1]{\mathrm{#1}}
\newcommand{\bbC}{\mathbb{C}}
\newcommand{\bbK}{K}
\newcommand{\bbN}{\mathbb{N}}
\newcommand{\bbR}{\mathbb{R}}
\newcommand{\bbZ}{\mathbb{Z}}
\newcommand{\rmH}{\mathrm{H}}
\newcommand{\B}{\mathcal{B}}
\newcommand{\C}{\mathcal{C}}
\newcommand{\Z}{\mathcal{Z}}
\renewcommand{\a}{\alpha}
\renewcommand{\b}{\beta}
\newcommand{\ol}[1]{\overline{#1}}
\renewcommand{\t}[1]{\tilde{#1}}
\newcommand{\fin}{\nolinebreak\hspace{\stretch{1}}$\lhd$}
\newcommand{\actson}{\curvearrowright}
\renewcommand{\to}{\longrightarrow}
\begin{document}

\title{Euclidean-valued group cohomology is always reduced}

\author{Tim Austin\footnote{Research supported by a fellowship from the Clay Mathematics Institute}}

\date{}

\maketitle

\begin{abstract}
Let $K$ be a topological field and $G$ a countable discrete group.  Then for any linear $G$-action on a finite-dimensional vector space over $K$, the groups of coboundaries in the inhomogeneous bar resolution are closed in all degrees, and hence the cohomology is reduced in all degrees. This can be deduced from a more general automatic-closure theorem for continuous linear transformations between inverse limits of finite-dimensional vector spaces.

\vspace{7pt}

\noindent Keywords: Finite-dimensional representations; reduced cohomology; automatic closure

\vspace{7pt}

\noindent MSC (2010): 20J06, 20C99, 46A13
\end{abstract}

Let $G$ be a countable discrete group and $\pi:G\actson M$ a continuous action on a topological Abelian group.  A fundamental tool for describing the structure of this $G$-module is the group cohomology $\rmH^\ast(G,M)$.  This is often defined in terms of the inhomogeneous bar resolution
\[M\stackrel{d_1}{\to} \C^1(G,M) \stackrel{d_2}{\to} \C^2(G,M) \stackrel{d_3}{\to}\cdots,\]
where $\C^p(G,M)$ is the group of all functions $G^p\to M$, and $d_p:\C^{p-1}(G,M)\to \C^p(G,M)$ is the coboundary operator defined by
\begin{multline*}
d_pf(g_1,\ldots,d_p) := \pi^{g_1}f(g_2,\ldots,g_p) + \sum_{i=1}^p(-1)^if(g_1,\ldots,g_ig_{i+1},\ldots,g_p)\\ + (-1)^{p-1}f(g_1,\ldots,g_{p-1}).
\end{multline*}
In terms of these objects, one sets
\begin{multline*}
\Z^p(G,M) := \ker d_{p+1}, \quad \B^p(G,M) := \rm{img}\,d_p, \\ \hbox{and}\quad \rmH^p(G,M) := \Z^p(G,M)/\B^p(G,M)
\end{multline*}
(where $d_0$ is interepreted as $0$).

The group of cochains $\C^p(G,M) = M^{G^p}$ may naturally be endowed with the product topology arising from the topology of $M$.  (Note that if $G$ is infinite and $p\geq 1$, then this will not be discrete even if $M$ is a discrete module.) For this topology, one sees easily that each map $d_p$ is continuous, so $\ker d_p$ is closed.  However, $\rm{img}\,d_p$ need not be: that is, there may be cocycles $G^p\to M$ that can be approximated arbitrarily well by coboundaries, but are not coboundaries themselves.

In some applications to geometry and dynamics, the non-closure of $\rm{img}\,d_p$ is itself an important feature of the action $\pi$.  For this reason, one also defines the \textbf{reduced cohomology groups} by
\[\rmH^p_{\rm{red}}(G,M) := \Z^p(G,M)\big/\ol{\B^p(G,M)}.\]
The cohomology of this group and module is \textbf{reduced} in degree $p$ if $\B^p(G,M)$ is closed, and hence $\rmH^p(G,M) = \rmH^p_{\rm{red}}(G,M)$.

In applications of reduced cohomology, $M$ is most often an infinite-dimensional Banach or Fr\'echet space.  Most obviously, reduced cohomology in degree one provides an important characterization of Property (T)~(\cite[Section 2.12 and Chapter 3]{BekdelaHVal08}) and some generalizations of it to other Banach modules~(\cite{BadFurGelMon07}). The purpose of this note is to prove that cohomology is always reduced for finite-dimensional vector-space modules.  It will turn out that there is no extra difficulty in allowing vector spaces over arbitrary topological fields, including $\bbR$, $\bbC$, local fields, or any other field with its discrete topology.

\begin{thm}\label{thm:main}
Let $\bbK$ be a topological field. If $G$ is a countable discrete group and $V$ is a finite-dimensional $\bbK$-linear representation of $G$, then $\B^p(G,V)$ is a closed subgroup of $\Z^p(G,V)$ for all $p \in \bbN$.
\end{thm}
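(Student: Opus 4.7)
The plan is to deduce Theorem~\ref{thm:main} from a general automatic-closure statement for continuous linear maps between countable inverse limits of finite-dimensional $\bbK$-vector spaces, as advertised in the abstract. First I would recognise $\C^p(G,V) = V^{G^p}$ as such a limit: since $G$ is countable, enumerate $G^p = \{\g_1,\g_2,\ldots\}$ and write $V^{G^p} = \varprojlim_n V^{\{\g_1,\ldots,\g_n\}}$ under the obvious coordinate projections, a countable system of finite-dimensional $\bbK$-vector spaces. The coboundary $d_p$ is continuous and $\bbK$-linear, since each of its output coordinates is a $\bbK$-linear combination of finitely many input coordinates. The theorem would then reduce to the following statement: for any continuous $\bbK$-linear map $T\colon X \to Y$ between countable inverse limits $X = \varprojlim X_n$, $Y = \varprojlim Y_n$ of finite-dimensional $\bbK$-vector spaces, the image $T(X)$ is closed.

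To prove this auxiliary result I would proceed in three steps. First, for each $n$ the composite $\pi^Y_n \circ T\colon X \to Y_n$ is a continuous linear map into a finite-dimensional Hausdorff $\bbK$-space, so its kernel contains a basic open neighbourhood of $0$ in $X$, which has the form $\ker \pi^X_{m(n)}$ for some index $m(n)$; hence $\pi^Y_n \circ T = T_n \circ \pi^X_{m(n)}$ for a unique linear $T_n$. After re-indexing $X$ along $m(\cdot)$ we may assume $T_n\colon X_n \to Y_n$, and these then form a morphism of inverse systems. Second, for $y \in \ol{T(X)}$ the projection $\pi^Y_n(y)$ lies in $\ol{T_n(X_n)}$, which equals $T_n(X_n)$ because any linear subspace of a finite-dimensional Hausdorff $\bbK$-vector space is closed (being the kernel of a linear, hence continuous, map into another such space). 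Thus the affine sets $Z_n := T_n^{-1}(\pi^Y_n(y)) \subseteq X_n$ are nonempty, and the bonding maps $\phi_{n+1,n}$ of $X$ send $Z_{n+1}$ into $Z_n$.

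The main obstacle is to upgrade this coherent family of nonempty affine sets to an actual element $x \in X$ with $\pi^X_n(x) \in Z_n$ for every $n$, since countable inverse limits of nonempty sets can perfectly well be empty. Here I would exploit finite-dimensionality twice. First, set $Z_n' := \mcap_{k \geq n} \phi_{k,n}(Z_k) \subseteq Z_n$: this is a nested intersection of affine subspaces of the finite-dimensional space $X_n$, so it stabilises at some finite stage and is in particular nonempty. Second, a similar stabilisation applied to the fibres $\phi_{n+1,n}^{-1}(z) \cap Z_{n+1}$ for $z \in Z_n'$ shows that the restricted bonding maps $Z_{n+1}' \to Z_n'$ are surjective. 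A surjective countable inverse system of nonempty sets has nonempty limit by the Mittag--Leffler principle, producing $x \in X$ with $Tx = y$. Applying this closure result with $T = d_p\colon \C^{p-1}(G,V) \to \C^p(G,V)$ shows $\B^p(G,V)$ is closed in $\C^p(G,V)$, and hence in the subspace $\Z^p(G,V)$.
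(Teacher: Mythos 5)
Your overall architecture coincides with the paper's: both reduce Theorem~\ref{thm:main} to an automatic-closure theorem for continuous linear maps between countable inverse limits of finite-dimensional $\bbK$-vector spaces, both first factorize $T$ through the finite levels, and both then produce a genuine preimage of a point of $\ol{T(X)}$ by exploiting the stabilization of decreasing chains of (affine) subspaces of finite-dimensional spaces. Your Mittag--Leffler packaging --- stabilize the images $\phi_{k,n}(Z_k)$, check that the derived system $Z_n'$ has surjective bonding maps, then take a limit --- is exactly the content of the paper's lemma asserting $Q_i(\ker T)=q^\ell_i(\ker T_\ell)$ for large $\ell$ together with its recursive construction of the consistent sequence $(v_i)_i$; the paper simply works with the kernels and corrects a chosen preimage coset-by-coset rather than invoking the derived system of fibres. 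Your second and third steps are correct as written.

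The one step whose justification fails is the factorization. You assert that the kernel of $\pi^Y_n\circ T$ ``contains a basic open neighbourhood of $0$ in $X$, which has the form $\ker\pi^X_{m(n)}$''. For a discrete field this is fine, but for $\bbK=\bbR$ or $\bbC$ --- the headline case of the paper --- it is false twice over: a proper closed subspace of $\bbR^{\bbN}$ contains no nonempty open set, and $\ker\pi^X_m$ is itself not open. The conclusion you want, namely $\ker\pi^X_{m(n)}\leq\ker(\pi^Y_n\circ T)$ for some $m(n)$, is nevertheless true, and there are two quick repairs. The paper's: $\ker(\pi^Y_n\circ T)$ is closed of finite codimension, hence by the closed-subspace criterion for inverse limits it equals the decreasing intersection $\bigcap_i(\pi^X_i)^{-1}\bigl(\pi^X_i(\ker(\pi^Y_n\circ T))\bigr)$, which must stabilize, and the stable term contains $\ker\pi^X_i$. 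Alternatively: by continuity $\pi^Y_n\circ T$ maps some $\ker\pi^X_m$ into a neighbourhood $U$ of $0$ in $Y_n$ chosen to contain no nonzero linear subspace (possible since $Y_n\cong\bbK^d$ is Hausdorff and $\bbK a=\bbK$ for $a\neq 0$); since the image of $\ker\pi^X_m$ is a subspace, it must vanish. With either repair your proof is complete and is essentially the paper's.
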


It will be clear from the proof that the same result holds for many other natural choices of resolution used to compute $\rmH^\ast$, such as the homogeneous bar resolution.  Indeed, in some special cases the right choice of resolution gives an immediate proof of Theorem~\ref{thm:main}. Most simply, the conclusion is clear if there is a resolution of finite type, since then all the cohomology groups are finite-dimensional; for instance, this holds for any finite-dimensional representation of a group of type FP$_\infty$ (see, for instance,~\cite[Sections VIII.4 and VIII.5]{Bro82}).  But the general argument below reaches beyond such cases.

The proof of Theorem~\ref{thm:main} is motivated by the analogous result for compact coefficient modules, which is very simple: if $M$ is a compact Abelian $G$-module, then $\C^p(G,M) = M^{G^p}$ is also compact for every $p$, and hence the continuous image $\B^p(G,M) := d_{p-1}(\C^{p-1}(G,M))$ is necessarily closed.

To adapt this argument, we will prove a similar automatic-closure result for images of certain topological vector spaces over $\bbK$, which may be of interest in its own right.  To formulate this, we begin with the following definition.

\begin{dfn}
A \textbf{pro-f.-.d. space} over $\bbK$ is a topological $\bbK$-vector space $V$ which is an inverse limit of an inverse sequence
\[\cdots \stackrel{q_m}{\to} V_m \stackrel{q_{m-1}}{\to} \cdots \stackrel{q_1}{\to} V_1\]
in which every $V_i$ is a finite-dimensional $\bbK$-vector space (with the Hausdorff topology given by identifying with $\bbK^{\dim V_i}$), every $q_i$ is $\bbK$-linear, and where this inverse limit is given the inverse limit topology.
\end{dfn}

More concretely, this means that $V$ can be identified with the subgroup of those sequences $(v_i)_i \in \prod_i V_i$ which are \textbf{consistent}, in that $q_i(v_{i+1}) = v_i$ for every $i$.  For this subgroup, addition and scalar multiplication are pointwise, as in $\prod_i V_i$, and the topology is restricted from the product topology on $\prod_i V_i$.  We will often denote such an inverse sequence by $(V_i,q^j_i)_{j\geq i \geq 1}$, where $q^j_i := q_i\circ q_{i+1}\circ \cdots \circ q_{j-1}$ if $j > i$ and $q^i_i := \rm{id}_{V_i}$, and we will often denote a consistent sequence by $\lim_{i\leftarrow} v_i$.  The inverse limit $V$ is always equipped with a consistent family of continuous, $\bbK$-linear quotient maps $Q_i:V\to V_i$.

Of course, any finite-dimensional $\bbK$-vector space may be interpreted as a pro-f.-d. space by letting $V_i := V$ and $q_i := \rm{id}_{V_i}$ for every $i$.

The result at the heart of this paper is the following.

\begin{thm}[Automatic closure theorem]\label{thm:autoclos}
If $T:V\to W$ is a continuous $\bbK$-linear transformations of pro-f.-d. $\bbK$-vector spaces, then its image is a closed subspace of $W$.
\end{thm}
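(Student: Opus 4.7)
The strategy is to identify $T(V)$ with an inverse limit of its projections to the finite-dimensional quotients of $W$, and thereby show it is closed. Set $T_j := P_j \circ T : V \to W_j$. Each image $T_j(V)$ is a finite-dimensional $\bbK$-subspace of $W_j \cong \bbK^{\dim W_j}$, hence closed (any finite-dimensional subspace of $\bbK^n$ is the common zero-set of finitely many continuous $\bbK$-linear functionals). Consequently $\mcap_j P_j^{-1}(T_j(V))$ is closed in $W$ and trivially contains $T(V)$, so the theorem reduces to showing the reverse inclusion.

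The key ingredient is a factorization lemma: \emph{every continuous $\bbK$-linear map from $V$ into a finite-dimensional $\bbK$-space factors through some $Q_i$}. By breaking into coordinates it suffices to handle a continuous linear functional $\varphi : V \to \bbK$. Since $\bbK$ is Hausdorff, one can choose an open neighborhood $N \subsetneq \bbK$ of $0$; by continuity, $\varphi^{-1}(N)$ contains a basic open set of the form $Q_i^{-1}(U)$. For $v \in \ker Q_i$ and any $\l \in \bbK$, the element $\l v$ again lies in $\ker Q_i \subseteq \varphi^{-1}(N)$, so the entire $\bbK$-subspace $\bbK \cdot \varphi(v) \subseteq \bbK$ is contained in $N$. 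The only $\bbK$-subspaces of $\bbK$ are $\{0\}$ and $\bbK$, and $N \neq \bbK$, which forces $\varphi(v) = 0$ and hence $\ker Q_i \subseteq \ker \varphi$. Applied to each $T_j$, this yields a non-decreasing sequence of indices $i(j)$ and $\bbK$-linear maps $S_j : V_{i(j)} \to W_j$ with $T_j = S_j \circ Q_{i(j)}$.

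For the remaining inclusion, fix $w \in \mcap_j P_j^{-1}(T_j(V))$ and set $w_j := P_j(w) \in T_j(V)$. The fibres $A_j := T_j^{-1}(w_j) \subseteq V$ are nonempty affine sets with $A_{j+1} \subseteq A_j$ (since $p_j \circ T_{j+1} = T_j$ and $p_j(w_{j+1}) = w_j$). For each fixed $i$, the images $Q_i(A_j) \subseteq V_i$ form a decreasing chain of nonempty affine subspaces of the finite-dimensional space $V_i$; such a chain must stabilize at some nonempty $C_i \subseteq V_i$. Choosing $j$ large enough that both $Q_i(A_j) = C_i$ and $Q_{i+1}(A_j) = C_{i+1}$ yields $q_i(C_{i+1}) = q_i(Q_{i+1}(A_j)) = Q_i(A_j) = C_i$, so the transitions among the $C_i$ are surjective and one can inductively build a consistent sequence $(u_i)_i$ with $u_i \in C_i$. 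This determines some $v \in V$, and since $u_{i(j)} \in C_{i(j)} \subseteq Q_{i(j)}(A_j) = S_j^{-1}(w_j) \cap \mathrm{img}\,Q_{i(j)}$ for every $j$, we get $T_j(v) = S_j(u_{i(j)}) = w_j = P_j(w)$ for all $j$, whence $T(v) = w$.

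The main obstacle is the factorization lemma: its proof exploits both the Hausdorff assumption on $\bbK$ and the algebraic fact that $\bbK$, as a $1$-dimensional vector space over itself, admits no intermediate subspace, to pass from the relation ``$\bbK \cdot \varphi(v) \subseteq N$'' to ``$\varphi(v) = 0$'' without any metric or absolute value on $\bbK$. The ensuing lifting is a finite-dimensional Mittag--Leffler stabilization, which goes through because any decreasing chain of nonempty affine subspaces of a finite-dimensional $\bbK$-space is eventually constant.
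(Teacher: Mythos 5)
Your proof is correct, and its overall architecture matches the paper's: factor $T$ through the finite-dimensional stages, then run a Mittag--Leffler stabilization argument in finite dimensions to produce a consistent preimage. Two of your steps are genuinely different in execution, though. First, your proof of the factorization lemma (that any continuous linear map from $V$ to a finite-dimensional space kills some $\ker Q_i$) argues coordinatewise via a functional $\varphi$, a proper open neighbourhood $N$ of $0$ in $\bbK$, and the fact that $\bbK$ has no proper nonzero subspaces; the paper instead applies its closed-subspace criterion to $\ker(R_j\circ T)$ and uses that this kernel has finite codimension, so the decreasing intersection $\bigcap_i Q_i^{-1}(Q_i(\ker T))$ stabilizes. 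Both are sound; yours is more self-contained and makes the role of the Hausdorff topology on $\bbK$ explicit, while the paper's reuses its earlier lemma. Second, you stabilize the images $Q_i(T_j^{-1}(w_j))$ of the affine fibres and then lift through the resulting surjective system of stable values $C_i$, whereas the paper first proves a separate lemma stabilizing the kernel images $q^\ell_i(\ker T_\ell)$ and then builds the consistent preimage recursively, correcting each lift by an element of the kernel. These are two packagings of the same stabilization phenomenon --- your surjectivity $q_i(C_{i+1})=C_i$ is exactly what the paper's kernel-correction step establishes --- but your fibre formulation merges the paper's final two steps into one and avoids having to identify the stable kernel image with $Q_i(\ker T)$. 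One small point of care you handled correctly: since the maps $Q_i$ need not be surjective, $Q_{i(j)}(A_j)$ is $S_j^{-1}(w_j)\cap\mathrm{img}\,Q_{i(j)}$ rather than all of $S_j^{-1}(w_j)$, and your argument only ever evaluates $S_j$ on points of that image.
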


This will be proved following a sequence of lemmas.

\begin{lem}
If $V$ is the pro-f.-d. space given by the inverse sequence $(V_i,q^j_i)_{j\geq i\geq 1}$, and $Q_i:V\to V_i$ are the resulting quotient maps, then a vector subspace $W \leq V$ is closed if and only if
\[W = \bigcap_{i\geq 1}Q_i^{-1}(Q_i(W)).\]
\end{lem}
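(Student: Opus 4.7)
The proof splits into the two implications, of which the backward direction is the easier one. If the stated intersection identity holds for $W$, I would observe that each $Q_i(W)$ is a linear subspace of the finite-dimensional Hausdorff space $V_i \cong \bbK^{\dim V_i}$. Such subspaces are closed (this is classical for $\bbR$, $\bbC$, local fields, and trivial for the discrete topology; for a general topological field it is a standard consequence of the hypothesis that the topology on $V_i$ is the product topology on $\bbK^{\dim V_i}$). Hence each $Q_i^{-1}(Q_i(W))$ is closed in $V$ by continuity of $Q_i$, and $W$ is closed as an intersection of closed sets.

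For the forward direction, suppose $W$ is closed. The inclusion $W \subseteq \bigcap_i Q_i^{-1}(Q_i(W))$ is immediate. For the reverse inclusion, I would take $v \in \bigcap_i Q_i^{-1}(Q_i(W))$ and show that $v$ lies in the closure of $W$, which equals $W$ by assumption. A general basic open neighborhood of $v$ in the inverse limit topology has the form $\bigcap_{j=1}^n Q_{i_j}^{-1}(U_{i_j})$ for some finite index set $\{i_1, \ldots, i_n\}$ and open sets $U_{i_j} \ni Q_{i_j}(v)$. Letting $m := \max_j i_j$ and using the consistency relation $Q_{i_j} = q_{i_j}^m \circ Q_m$, this neighborhood contains the preimage $Q_m^{-1}(U)$ of the open set $U := \bigcap_{j} (q_{i_j}^m)^{-1}(U_{i_j}) \ni Q_m(v)$.

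To finish, since $v \in Q_m^{-1}(Q_m(W))$, I can pick $w \in W$ with $Q_m(w) = Q_m(v)$. Then $w$ lies in $Q_m^{-1}(U)$ and hence in the given neighborhood of $v$, so $v \in \ol{W} = W$.

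There is no real obstacle: the argument is a direct unpacking of the inverse limit topology, the only substantive input being the (standard) fact that linear subspaces of finite-dimensional Hausdorff topological $\bbK$-vector spaces are closed.
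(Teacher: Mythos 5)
Your proof is correct and follows essentially the same route as the paper's: the backward implication via closedness of subspaces of the finite-dimensional quotients, and the forward implication by unpacking basic inverse-limit neighbourhoods down to a single $Q_m^{-1}(U)$ (you phrase it as "$v$ lies in $\ol{W}$", the paper as the contrapositive "$v\notin W$ gives a separating neighbourhood", but the content is identical).
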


\begin{proof}
For each $i$, $Q_i(W)$ a subspace of the finite-dimensional space $V_i$, so it is closed.  Therefore the continuous pre-images $Q_i^{-1}(Q_i(W))$ are all closed, and hence so is their intersection.

On the other hand, if $W$ is closed and $v \in V\setminus W$, then $v$ has an open neighbourhood disjoint from $W$.  By definition of the inverse-limit topology, this implies that there is some $i\geq 1$ for which $Q_i(v)$ has an open neighbourhood $U_i$ such that $Q_i^{-1}(U_i) \cap W = \emptyset$.  This now implies $Q_i(v) \not\in Q_i(W)$.
\end{proof}

\begin{lem}\label{lem:comm-diag}
Suppose that $V$ and $W$ are pro-f.-d. spaces over $\bbK$, obtained from the inverse sequences $(V_i,q^j_i)_{j\geq i \geq 1}$ and $(W_i,r^j_i)_{j\geq i\geq 1}$ respectively.  Suppose also that $T:V\to W$ is a continuous $\bbK$-linear transformation.  Then there are a sequence $i_1 \leq i_2 \leq \cdots$ of indices and a sequence of $\bbK$-linear transformations $T_j:V_{i_j}\to W_j$ for which the following diagram commutes:
\begin{center}
$\phantom{i}$\xymatrix{
V \ar@{->>}[r]\ar_T[d] & \cdots \ar@{->>}[r] & V_{i_3} \ar@{->>}[r]\ar^{T_3}[d] & V_{i_2} \ar@{->>}[r]\ar^{T_2}[d] & V_{i_1} \ar^{T_1}[d]\\
W \ar@{->>}[r] & \cdots \ar@{->>}[r] & W_3 \ar@{->>}[r] & W_2 \ar@{->>}[r] & W_1
}
\end{center}
\end{lem}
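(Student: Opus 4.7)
The crux of the proof is to establish, for each $j$, a factoring of the continuous $\bbK$-linear map $R_j \circ T \colon V \to W_j$ through a finite stage $V_{i_j}$ of the source inverse system. Once this is done, the remaining work is algebraic bookkeeping in finite-dimensional vector spaces.

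For this main step, I would prove that for each $j$ there exists an index $i_j$ with $\ker Q_{i_j} \subseteq \ker(R_j \circ T)$. Since $W_j$ is finite-dimensional, this reduces, by decomposing into coordinates, to the case of a continuous $\bbK$-linear functional $\phi \colon V \to \bbK$, which I would handle as follows. By Hausdorffness of $\bbK$, fix an open neighbourhood $U$ of $0$ in $\bbK$ with $U \neq \bbK$. By continuity of $\phi$ and the definition of the inverse-limit topology, there exist an index $i$ and an open $N \ni 0$ in $V_i$ such that $\phi(Q_i^{-1}(N)) \subseteq U$. For any $v \in \ker Q_i$ and any $\lambda \in \bbK$, the vector $\lambda v$ still lies in $\ker Q_i$, and hence $\lambda\phi(v) \in U$; since $\bbK \cdot \phi(v)$ equals either $\{0\}$ or all of $\bbK$ and $U \neq \bbK$, we conclude $\phi(v) = 0$. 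Taking a maximum of the indices produced by each coordinate of $R_j \circ T$ gives $i_j$; monotonicity is then enforced by replacing $i_j$ with $\max(i_j, i_{j-1})$, the kernel containment persisting because $\ker Q_{i'} \subseteq \ker Q_i$ whenever $i' \geq i$.

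Given this factoring, there is a unique $\bbK$-linear map $\hat T_j \colon Q_{i_j}(V) \to W_j$ satisfying $\hat T_j \circ Q_{i_j} = R_j \circ T$, and the outer squares of the diagram will automatically commute. To extend $\hat T_j$ to a map $T_j \colon V_{i_j} \to W_j$ while keeping the inner squares commutative, I would inductively choose linear complements $Z_j \leq V_{i_j}$ of $Q_{i_j}(V)$ satisfying the coherence condition $q^{i_{j+1}}_{i_j}(Z_{j+1}) \subseteq Z_j$, and set $T_j := \hat T_j \circ \pi_j$, where $\pi_j$ is the projection onto $Q_{i_j}(V)$ along $Z_j$. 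Such coherent $Z_j$ exist because $q^{i_{j+1}}_{i_j}$ sends $Q_{i_{j+1}}(V)$ onto $Q_{i_j}(V)$, so any representative of a class in $V_{i_{j+1}}/Q_{i_{j+1}}(V)$ can be adjusted by an element of $Q_{i_{j+1}}(V)$ to make its $q^{i_{j+1}}_{i_j}$-image land in $Z_j$.

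The inner square $T_j \circ q^{i_{j+1}}_{i_j} = r^{j+1}_j \circ T_{j+1}$ is then verified by splitting $V_{i_{j+1}} = Q_{i_{j+1}}(V) \oplus Z_{j+1}$: on the first summand a short chase through the definitions shows both sides equal $R_j \circ T$ precomposed with a section of $Q_{i_{j+1}}$, and on the second summand both sides vanish by the coherence condition together with the fact that $T_j, T_{j+1}$ are built to be zero on $Z_j, Z_{j+1}$. The main obstacle is the factoring step: it is the only genuinely topological ingredient, and it is where the Hausdorff topological-field structure of $\bbK$ (in particular, the invertibility of every nonzero scalar) is used decisively. The remaining construction is routine linear algebra in finite dimensions.
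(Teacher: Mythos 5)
Your proof is correct, and its overall skeleton --- reduce the lemma to showing that each $R_j\circ T$ factors through some finite stage, i.e.\ that $\ker Q_{i_j}\subseteq\ker(R_j\circ T)$, then let uniqueness/linear algebra handle the commutativity --- matches the paper's. However, you establish the key factorization by a genuinely different argument: you reduce to a scalar-valued functional $\phi$ and use the dilation trick ($\ker Q_i$ is a $\bbK$-subspace contained in a basic neighbourhood on which $\phi$ takes values in a proper open set $U\subsetneq\bbK$, so invertibility of nonzero scalars forces $\phi$ to vanish on it). The paper instead invokes its preceding lemma: since $\ker(R_j\circ T)$ is closed it equals $\bigcap_i Q_i^{-1}\bigl(Q_i(\ker(R_j\circ T))\bigr)$, and finite codimension makes this decreasing intersection stabilize at a finite stage $i$, giving $\ker(R_j\circ T)\supseteq\ker Q_i$. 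Your route is more self-contained (it does not use the closed-subspace characterization at all) and isolates exactly where the Hausdorff topological-field hypothesis enters; the paper's route recycles the lemma it has just proved. You also devote substantial effort to building coherent complements $Z_j$ so as to extend $\hat T_j$ from $Q_{i_j}(V)$ to all of $V_{i_j}$ and to verifying the inner squares by hand. The paper skips this entirely: it treats the projections $Q_i$ as surjective (as its double-headed arrows indicate --- harmless, since each $V_i$ may be replaced by the stable image $\bigcap_{j\geq i}q^j_i(V_j)$, onto which $Q_i$ does surject by a Mittag--Leffler argument in finite dimensions), after which each $T_j$ is unique and consistency of the family is automatic. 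Your extra construction is correct and handles the non-surjective case explicitly, at the cost of some added bookkeeping.
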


\begin{proof}
Let $Q_i:V\to V_i$ and $R_j:W\to W_j$ be the quotient maps.

If an $i_j$ and $T_j$ exist for some $j$, then the choice of $T_j$ for this $i_j$ is clearly unique.  It therefore suffices to prove existence for any fixed $j$, since that uniqueness then forces the resulting family to be consistent, so that the above diagram commutes.

We must therefore show that, for each $j$, the composition $R_j\circ T:V\to W_j$ factorizes through $Q_i$ for some $i$.  Effectively, this restricts our attention to a finite-dimensional target space $W$.  After this restriction, it suffices to show that some $i$ satisfies $\ker Q_i \leq \ker T$.

Since $\ker T$ is a closed vector subspace of $V$, the previous lemma gives
\[\ker T = \bigcap_{i\geq 1}Q_i^{-1}(Q_i(\ker T)).\]
However, $\ker T$ has finite co-dimension in $V$, so this finite intersection must stabilize at some finite $i$, giving $\ker T = Q_i^{-1}(Q_i(\ker T)) \geq \ker Q_i$, as required.
\end{proof}

In the setting of Lemma~\ref{lem:comm-diag}, the space $V$ may always be identified with the inverse limit of the cofinal inverse subsequence
\[\cdots \stackrel{q^{i_4}_{i_3}}{\to} V_{i_3} \stackrel{q^{i_3}_{i_2}}{\to} V_{i_2} \stackrel{q^{i_2}_{i_1}}{\to} V_{i_1}.\]
Henceforth, after fixing a continuous linear transformation $T$ of interest, we will usually make this identification, and accordingly relabel this new inverse subsequence as $(V_i,q^j_i)_{j\geq i\geq 1}$.

\begin{cor}
In the notation of Lemma~\ref{lem:comm-diag}, and after re-labeling as above,
\[\ker T = \bigcap_{i \geq i}Q_i^{-1}(\ker T_i).\]
\qed
\end{cor}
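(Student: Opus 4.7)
The plan is a direct diagram chase using the commutativity relation $R_j\circ T = T_j\circ Q_j$ from Lemma~\ref{lem:comm-diag}, bearing in mind the subsequent relabeling, which ensures that a linear map $T_j:V_j\to W_j$ is defined for every $j\geq 1$.

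First I would verify the inclusion $\ker T \subseteq \bigcap_{i\geq 1}Q_i^{-1}(\ker T_i)$.  If $v\in\ker T$, then for each $i$ one has $T_i(Q_i(v)) = R_i(T(v)) = 0$, so $Q_i(v)\in \ker T_i$.

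For the reverse inclusion, I would take $v$ in the intersection, so that $T_i(Q_i(v)) = 0$ for every $i$; commutativity then gives $R_i(T(v)) = 0$ for every $i$.  Since $W$ is realized as the space of consistent sequences in $\prod_i W_i$ with the maps $R_i$ acting as coordinate projections, an element whose image vanishes under every $R_i$ is necessarily zero; hence $T(v)=0$.

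I do not expect a serious obstacle.  The entire content is unwinding the commutative diagram together with the fact $\bigcap_i \ker R_i = 0$, which is immediate from the definition of a pro-f.-d. space.  The main thing to keep straight is the role of the relabeling: without it, $T_i$ would only be defined on some $V_{i_j}$, and the statement as written would not quite make sense.
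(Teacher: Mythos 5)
Your proof is correct and is exactly the diagram chase the paper intends (the corollary is stated with an immediate \qed, the point being that $R_j\circ T=T_j\circ Q_j$ after relabeling and that the $R_j$ jointly separate points of the inverse limit). Nothing to add.
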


In spite of this corollary, one can certainly have $\ker T_i \gneqq Q_i(\ker T)$ for all finite $i$.  However, the following more subtle conclusion does always hold.

\begin{lem}
With $T$ and its presentation as above, there is a non-decreasing function $\ell:\bbN\to \bbN$ such that $\ell(i) \geq i$ and
\[Q_i(\ker T) = q^\ell_i(\ker T_\ell) \quad \forall i \in \bbN\ \forall \ell \geq \ell(i).\]
\end{lem}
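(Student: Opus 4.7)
For fixed $i$, set $L_i^{(\ell)} := q^\ell_i(\ker T_\ell) \subseteq V_i$ for each $\ell \geq i$. Commutativity of the diagram in Lemma~\ref{lem:comm-diag} forces $q^{\ell'}_\ell(\ker T_{\ell'}) \subseteq \ker T_\ell$ whenever $\ell' \geq \ell$, so $L_i^{(\ell')} \subseteq L_i^{(\ell)}$. Thus $(L_i^{(\ell)})_{\ell\geq i}$ is a non-increasing chain of subspaces of the finite-dimensional space $V_i$, and must stabilize past some least index $\ell_0(i) \geq i$; I will then set $\ell(i) := \max(\ell_0(i), \ell(i-1))$ to ensure monotonicity, and write $L_i$ for the stable value. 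The easy direction $Q_i(\ker T) \subseteq L_i$ is immediate from the preceding corollary: for $v \in \ker T$, $Q_\ell(v) \in \ker T_\ell$, whence $Q_i(v) = q^\ell_i(Q_\ell(v)) \in L_i^{(\ell)}$ for every $\ell \geq i$.

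The substance of the lemma is the reverse inclusion $L_i \subseteq Q_i(\ker T)$, which I plan to deduce from the surjectivity claim
\[q^{j+1}_j\big(L_{j+1}\big) = L_j \quad \text{for every } j \in \bbN.\]
Given this surjectivity, any $w \in L_i$ can be lifted step-by-step to a consistent sequence $(v_j)_{j \geq i}$ with $v_j \in L_j$ and $v_i = w$; setting $v_j := q^i_j(w)$ for $j < i$ (using $q^i_j(L_i) \subseteq L_j$, verified by a direct definition chase) extends this to a consistent element $v \in V$. Since $v_j \in L_j \subseteq \ker T_j$ for every $j$, applying each $R_j$ yields $R_j(Tv) = T_j(v_j) = 0$; the $R_j$ jointly separate points of $W$, so $Tv = 0$, and therefore $w = Q_i(v) \in Q_i(\ker T)$, as required.

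The heart of the argument, and where I expect the main technical difficulty, is the surjectivity $q^{j+1}_j(L_{j+1}) = L_j$. Given $w \in L_j$, consider the affine subspaces
\[A_\ell := \{v \in q^\ell_{j+1}(\ker T_\ell) : q^{j+1}_j(v) = w\} \subseteq V_{j+1},\]
which are nonempty whenever $\ell \geq \ell(j)$ (because $w \in L_j = q^{j+1}_j(q^\ell_{j+1}(\ker T_\ell))$), and which form a non-increasing chain in $\ell$ since the containing subspaces $q^\ell_{j+1}(\ker T_\ell)$ do. A non-increasing chain of nonempty affine subspaces of a finite-dimensional space stabilizes to a nonempty affine set, so any element $v$ of the stable value lies in $\bigcap_\ell q^\ell_{j+1}(\ker T_\ell) = L_{j+1}$ and satisfies $q^{j+1}_j(v) = w$. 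The delicate point is imposing the lifting constraint $q^{j+1}_j(v) = w$ simultaneously with membership in every $q^\ell_{j+1}(\ker T_\ell)$, which is precisely what the $A_\ell$ encode; this Mittag-Leffler-flavored step is the essential content beyond the finite-dimensional stabilization of the $L_i^{(\ell)}$'s already used to define $L_i$.
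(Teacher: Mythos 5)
Your proof is correct. The first half — stabilization of the decreasing chain $q^\ell_i(\ker T_\ell)$ inside the finite-dimensional space $V_i$, and the easy inclusion $Q_i(\ker T)\subseteq L_i$ — is exactly the paper's. For the reverse inclusion both arguments are Mittag-Leffler-type liftings, but organized differently: the paper works along the sparse subsequence $i_0<i_1<\cdots$ with $i_{m+1}=\ell(i_m)$ and, at each stage, lifts $v_m\in L_{i_m}$ by choosing its preimage from $\ker T_{i_{m+2}}$, which automatically places the lift back in the stable subspace $L_{i_{m+1}}$; you instead isolate the one-step surjectivity $q^{j+1}_j(L_{j+1})=L_j$ as a separate claim and then lift one index at a time. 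Your decomposition is arguably the cleaner one — it exhibits $(L_j)_j$ as a surjective inverse system contained in the kernels $\ker T_j$, whose limit therefore lies in $\ker T$ and surjects onto each $L_i$. However, the step you flag as the heart of the argument (the decreasing chain of affine sets $A_\ell$) is unnecessary: taking any $\ell\geq\max(\ell(j),\ell(j+1))$ one has directly
\[L_j \;=\; q^\ell_j(\ker T_\ell)\;=\;q^{j+1}_j\bigl(q^\ell_{j+1}(\ker T_\ell)\bigr)\;=\;q^{j+1}_j(L_{j+1}),\]
so any preimage of $w$ under $q^{j+1}_j$ chosen inside $q^\ell_{j+1}(\ker T_\ell)$ already lies in $L_{j+1}$. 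There is no simultaneity problem to resolve, because membership in $L_{j+1}$ amounts to membership in a single sufficiently deep image rather than in all of them at once. (Your affine-subspace stabilization is nonetheless valid over an arbitrary field — the underlying linear parts form a decreasing chain, and a nonempty affine subspace contained in another with the same direction equals it — so the proof goes through as written; it is simply more machinery than the statement requires.)
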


\begin{proof}
For fixed $i$, $(q^\ell_i(\ker T_\ell))_{\ell \geq i}$ is a decreasing sequence of subspaces of the finite-dimensional space $V_i$, hence must stabilize: that is, there is some $\ell \geq i$ such that
\begin{eqnarray}\label{eq:stab}
q^j_i(\ker T_j) = q^\ell_i(\ker T_\ell) \quad \forall j\geq \ell.
\end{eqnarray}
Let $\ell(i)$ be a choice of such an $\ell$, and assume without loss of generality that $\ell(i) > 1$.  We will show that this common image is equal to $Q_i(\ker T)$.  By re-labeling, it suffices to prove this when $i=1$.

Define a subsequence of indices $i_0 < i_1 < i_2 < \ldots$ by letting $i_0 := 1$ and recursively setting $i_{m+1} := \ell(i_m)$.

Suppose that $v \in q_1^{i_1}(\ker T_{i_1})$.  By~(\ref{eq:stab}), $v$ also lies in $q_1^{i_2}(\ker T_{i_2})$, since $i_2 > i_1$, so suppose $u \in \ker T_{i_2}$ satisfies $q_1^{i_2}(u) = v$, and now let $v_1 := q_{i_1}^{i_2}(u) \in q^{i_2}_{i_1}(\ker T_{i_2})$, so we still have $v = q^{i_1}_1(v_1)$.

Now, repeating this reasoning from~(\ref{eq:stab}) with $i_1$ in place of $i_0$ and  $v_1$ in place of $v$, we obtain $v_2 \in q^{i_3}_{i_2}(\ker T_{i_3})$ such that $q^{i_2}_{i_1}(v_2) = v_1$.  Continuing in this way gives a consistent sequence $(v_j)_{j \geq 1}$ in $\prod_{j\geq 1}V_{i_j}$ such that $v_{i_j} \in q^{i_{j+1}}_{i_j}(\ker T_{i_{j+1}})$ for every $j$.  Therefore $\lim_{i\leftarrow}v_i$ lies in $\bigcap_{j\geq 1}Q_{i_j}^{-1}(\ker T_{i_j}) = \ker T$, and its image in $V_1$ is $v$, as required.
\end{proof}

\begin{proof}[Proof of Theorem~\ref{thm:autoclos}]
Let $T:V\to W$, and suppose that $w \in W$ is such that $w_i := R_i(w) \in R_i(T(V))$ for every $i$.  We need to show that $w \in T(V)$.

More explicitly, we need to find a consistent sequence $(v_i)_i$ in $\prod_i V_i$ such that $T_i(v_i) = w_i$ for each $i$.  The sequence $(v_i)_i$ will be constructed recursively.  The key idea is that we construct in parallel a sequence $(\t{v}_i)_i \in \prod_i V_{\ell(i)}$ such that $v_i = q^{\ell(i)}_i(\t{v}_i)$ for each $i$.

To begin, let $\t{v}_1 \in V_{\ell(1)}$ be a $T_{\ell(1)}$-pre-image of $w_{\ell(1)}$, and let $v_1 := q^{\ell(1)}_1(\t{v}_1)$.

Now, suppose that for some $i \geq 2$ we have already constructed $(\t{v}_j)_{j=1}^{i-1} \in \prod_{j=1}^{i-1}V_{\ell(j)}$ and $(v_j)_{j=1}^{i-1} \in \prod_{j=1}^{i-1}V_j$ such that \begin{itemize}
\item $T_{\ell(j)}(\t{v}_j) = w_{\ell(j)}$,
\item $v_j = q^{\ell(j)}_j(\t{v}_j)$,
\item and $q^j_{j'}(v_j) = v_{j'}$ whenever $j' \leq j \leq i-1$ (note that we do not require the analog of this consistency for $(\t{v}_j)_j$).
\end{itemize}
We will construct $v_i$ and $\t{v}_i$ so that all of these properties still hold for the enlarged family.

To do this, let $v'$ be any element of $T_{\ell(i)}^{-1}\{w_{\ell(i)}\}$.  Then we have
\begin{eqnarray*}
&&T_{\ell(i-1)}q^{\ell(i)}_{\ell(i-1)}(v') = r^{\ell(i)}_{\ell(i-1)}T_{\ell(i)}(v') = r^{\ell(i)}_{\ell(i-1)}(w_{\ell(i)}) = w_{\ell(i-1)} = T_{\ell(i-1)}(\t{v}_{i-1})\\
&&\Longrightarrow \quad \t{v}_{i-1} - q^{\ell(i)}_{\ell(i-1)}(v') \in \ker T_{\ell(i-1)}\\
&&\Longrightarrow \quad q^{\ell(i-1)}_{i-1} (\t{v}_{i-1}) - q^{\ell(i)}_{i-1}(v') = v_{i-1} - q^{\ell(i)}_{i-1}(v') \in q^{\ell(i-1)}_{i-1}(\ker T_{\ell(i-1)}).
\end{eqnarray*}
By the definition of $\ell(\cdot)$, this last subspace is equal to $q^j_{i-1}(\ker T_j)$ for any other $j \geq \ell(i-1)$; in particular, it is equal to $q^{\ell(i)}_{i-1}(\ker T_{\ell(i)})$.  Let $v'' \in \ker T_{\ell(i)}$ be such that
\[q^{\ell(i)}_{i-1}(v'') = v_{i-1} - q^{\ell(i)}_{i-1}(v') \quad \Longrightarrow \quad q^{\ell(i)}_{i-1}(v' + v'') = v_{i-1}.\]

Finally, letting $\t{v}_i := v' + v'' \in V_{\ell(i)}$ and $v_i := q^{\ell(i)}_i(\t{v}_i)$, one has
\[T_{\ell(i)}(\t{v}_i) = T_{\ell(i)}(v' + v'') = T_{\ell(i)}(v') = w_{\ell(i)}\]
and
\[q^i_{i-1}(v_i) = q^{\ell(i)}_{i-1}(v' + v'') = v_{i-1},\]
so the induction continues.  This completes the proof.
\end{proof}

\begin{proof}[Proof of Theorem~\ref{thm:main}]
For each $p$, the group $\C^p(G,V) = V^{G^p}$ is a countable product of finite-dimensional $\bbK$-vector-spaces, so it is a pro-f.-d. space.  Since the boundary homomorphisms
\[d_{p+1}:\C^p(G,V)\to \C^{p+1}(G,V)\]
are clearly continuous (as they are for cohomology with values in any topological module), by Theorem~\ref{thm:autoclos} they are also closed: that is,
\[\B^p(G,V) = d_p(\C^{p-1}(G,V))\]
is closed in $\Z^p(G,V)$, as required.
\end{proof}

Since a countable product of pro-f.-d. spaces with the product topology is still pro-f.-d., the above proof actually gives the following strengthening of Theorem~\ref{thm:main}.

\begin{cor}
If $G$ is a countable discrete group, $\bbK$ is a topological field and $V$ is a pro-f.-d. $G$-space over $\bbK$, then $\B^p(G,V)$ is closed in $\Z^p(G,V)$ for all $p\geq 0$. \qed
\end{cor}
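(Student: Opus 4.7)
The plan is to reuse the proof of Theorem~\ref{thm:main} essentially verbatim, the only new ingredient being the claim, flagged in the sentence immediately preceding the corollary, that the class of pro-f.-d. spaces over $\bbK$ is closed under countable products with the product topology. So first I would verify this closure property, and then invoke Theorem~\ref{thm:autoclos} exactly as before.

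To establish the closure under countable products, suppose $V^{(n)}$ is presented as the inverse limit of an inverse sequence $(V^{(n)}_i, q^{(n),j}_i)_{j \geq i \geq 1}$ of finite-dimensional $\bbK$-vector spaces for each $n\geq 1$. I would form a new inverse sequence by setting
\[U_i := \mprod_{n=1}^i V^{(n)}_i,\]
a finite product of finite-dimensional spaces, hence finite-dimensional, and defining the transition map $U_{i+1}\to U_i$ to send $(x^{(1)},\ldots,x^{(i+1)})\mapsto (q^{(1),i+1}_i(x^{(1)}),\ldots,q^{(i),i+1}_i(x^{(i)}))$, that is, applying each of the $q$-maps on the first $i$ coordinates and discarding the last. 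A consistent sequence $(u_i)_i$ in the resulting inverse limit then decomposes coordinate-wise into a consistent sequence in each $V^{(n)}$, and this assembly provides a $\bbK$-linear bijection between $\lim_{i\leftarrow}U_i$ and $\prod_n V^{(n)}$. A direct inspection of the defining basic open sets shows that this bijection is a homeomorphism when the left-hand side carries the inverse-limit topology and the right-hand side carries the product topology.

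With this in hand, the rest is immediate. Since $G$ is countable, $G^p$ is countable, so $\C^p(G,V) = V^{G^p}$ is a countable product of copies of the pro-f.-d. space $V$, hence itself a pro-f.-d. space over $\bbK$. The coboundary operators $d_p:\C^{p-1}(G,V)\to \C^p(G,V)$ are continuous and $\bbK$-linear, since each of the finitely many summands defining them is a composition of a translation-type coordinate projection with the scalar action of $\pi^{g_1}$, both of which are continuous and linear. Applying Theorem~\ref{thm:autoclos} to $d_p$ then shows that $\B^p(G,V) = d_p(\C^{p-1}(G,V))$ is closed in $\C^p(G,V)$, and hence closed in the subgroup $\Z^p(G,V) = \ker d_{p+1}$.

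The only step requiring any genuine work is the identification of $\prod_n V^{(n)}$ as a pro-f.-d. space, and even there the main point is merely to choose the inverse sequence in which the $n$-th factor only begins to contribute at stage $n$; after that, checking agreement of the two topologies is a routine unpacking. I therefore do not anticipate any real obstacle.
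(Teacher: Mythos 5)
Your proposal is correct and follows exactly the route the paper intends: the paper's entire justification for this corollary is the one-line remark that a countable product of pro-f.-d. spaces is again pro-f.-d., after which the proof of Theorem~\ref{thm:main} (continuity and linearity of $d_p$ plus Theorem~\ref{thm:autoclos}) applies verbatim. Your diagonal presentation $U_i := \prod_{n=1}^i V^{(n)}_i$ with the stated transition maps is a correct way to fill in the product claim that the paper leaves unproved.
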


However, the above methods do not seem to bear on the following natural relative of Theorem~\ref{thm:main}.

\begin{ques}\label{ques:1}
Are there a countable group $G$ and an integer $p\geq 1$ such that the cohomology $\rmH^p(G,\bbZ)$ is not reduced, where $\bbZ$ is given the trivial $G$-action?  Can this occur for other discrete $G$-modules?
\end{ques}

This was Question 7.4 in~\cite{AusMoo--cohomcty}. Of course, for $M$ with trivial $G$-action one always has $\B^1(G,M) = 0$, so the interesting cases start at $p=2$.  Moreover, whenever $M$ is a Polish $G$-module and one knows that $\rmH^p(G,M)$ is countable for some $p$, it follows that $\B^p(G,M)$ is closed for very abstract reasons.  Indeed, $\B^p(G,M) = d_p(\C^{p-1}(G,M))$ is a continuous image of a Polish space, hence analytic in the sense of Souslin.  If $\Z^p(G,M)\setminus \B^p(G,M)$ is a countable union of cosets of $\B^p(G,M)$, then one has that $\B^p(G,M)$ is also co-analytic, hence Borel.  As such it is either meagre in $\Z^p(G,M)$, or co-meagre inside some nonempty open set.  Using again that countably many cosets of $\B^p(G,M)$ cover $\Z^p(G,M)$, it cannot be meager, so it is co-meager in some non-empty open subset $U \subseteq \Z^p(G,M)$.  However, this now implies that \[\B^p(G,M) \supseteq (U\cap \B^p(G,M)) - (U\cap \B^p(G,M)),\]
which contains a neighbourhood of $0$, so $\B^p(G,M)$ is open and therefore also closed.

This last argument applies, in particular, if $G$ satisfies some cohomological finiteness properties, such as FP$_k$ for some $k > p$; see, for instance,~\cite[Chapter VIII]{Bro82}.

One cannot hope to answer Question~\ref{ques:1} positively using a method too close to that of the present note, because the analog of Theorem~\ref{thm:autoclos} is false for inverse limits of finitely-generated torsion-free Abelian groups.

\begin{ex}\label{ex:1}
Let $W := V := \bbZ^\bbN$, the infinite product with the product topology, and let $T:V\to W$ be given by
\[T(p_1,p_2,p_3,\ldots) := (p_1 - 2p_2,p_2 - 2p_3,p_3 - 2p_4,\ldots).\]
Each of $V$ and $W$ is an inverse limit for the sequence of projections $\bbZ^{n+1}\to \bbZ^n$ onto the initial coordinates, and $T$ may be described using these projections via a commutative diagram as in Lemma~\ref{lem:comm-diag}, since each coordinate output by $T$ depends on only two input coordinates.  It also follows that $T$ is continuous.

A point $(q_1,q_2,\ldots) \in \bbZ^\bbN$ lies in $T(V)$ if and only if there is some $(p_1,p_2,\ldots) \in \bbZ^\bbN$ such that
\begin{eqnarray*}
q_1 = p_1 - 2p_2, \quad q_2 = p_2 - 2p_3, \quad \ldots.
\end{eqnarray*}
This re-arranges to give
\[p_2 = \frac{1}{2}(p_1 - q_1), \quad p_3 = \frac{1}{4}(p_1 - q_1 - 2q_2), \quad p_4 = \frac{1}{8}(p_1 - q_1 - 2q_2 - 4q_3), \quad \ldots,\]
and this requires, in particular, that
\[p_1 \in \bigcap_{i\geq 1}(q_1 + 2q_2 + \cdots + 2^{i-1}q_i + 2^i\bbZ).\]
If one can find such an integer $p_1$, then the equations above allow one to find suitable $p_j$ for all $j\geq 1$.  Such a $p_1$ exists, for example, if $q_i = 0$ for all large enough $i$, since in that case one can let $p_1 := \sum_{i\geq 1}2^{i-1}q_i$.  However, it does not exist for $(q_1,q_2,\ldots) = (1,0,1,0,\ldots)$, since then one has
\[\min\{|p|\,|\ p \in q_1 + 2q_2 + \cdots + 2^{i-1}q_i + 2^i\bbZ\}\sim \frac{1}{3}2^i \to \infty \quad \hbox{as}\ i\to\infty.\]
Therefore the image $T(V)$ is dense in $W$, but not equal to the whole of $W$.  \fin
\end{ex}

It could be interesting to try to adapt the idea behind Example~\ref{ex:1} into an example in which $\B^p(G,\bbZ)$ is not closed for some $G$ and $p$.  I do not know how to do this, but simpler examples show that in the `mixed' setting of actions on torsion-free Abelian Lie groups, the analog of Theorem~\ref{thm:main} is itself false.  The following is essentially copied from the example in~\cite[Lemma 7.7]{AusMoo--cohomcty}.

\begin{ex}\label{ex:2}
Let $\a,\b \in \bbR$ be irrational and rationally independent, let $M = \bbR\times \bbZ^2$, and give it the $\bbZ$-action generated by
\[T(t,p,q) := (t + p\a + q\b,p,q).\]
Then an easy calculation gives that $\Z^1(\bbZ,M) \cong \rm{Hom}(\bbZ,\bbR) = \bbR$, and under this isomorphism $\B^1(\bbZ,M)$ is identified with the submodule $\bbZ\a + \bbZ\b$, which is dense but not closed in $\bbR$. \fin
\end{ex}

Another obvious extension of Theorem~\ref{thm:main} would be to non-discrete groups $G$.  In that setting there are several cohomology theories for Polish $G$-modules that could be of interest, such as $\rmH^\ast_\rm{cts}$, defined using continuous cochains, and $\rmH^\ast_\rm{m}$, defined using measurable cochains.  In case the coefficient module is a Fr\'echet space, it was shown in~\cite[Theorem A]{AusMoo--cohomcty} that these theories coincide.

\begin{ques}\label{ques:2}
If $G$ is a locally compact, second countable group, is it true that $\rmH_{\rm{cts}}^p(G,V)$ is reduced for every $p$ and every Euclidean $G$-space $V$?  Is it true that $\rmH^p_\rm{m}(G,\bbZ)$ is reduced for every $p$?
\end{ques}

The paper~\cite{AusMoo--cohomcty} offers a more complete overview of various cohomology theories in this setting, and a more thorough set of further references.

The issue of Question~\ref{ques:2} is potentially important for calculational problems in which one wishes to use a version of the Lyndon-Hochschild-Serre spectral sequence.  If $H\unlhd G \twoheadrightarrow K$ is a short exact sequence of locally compact, second countable groups, and $V$ is a Fr\'echet $G$-module, then in principle this spectral sequence gives
\[\rmH^p_{\rm{cts}}(H,\rmH^q_\rm{cts}(K,V))\Longrightarrow \rmH^{p+q}_{\rm{cts}}(G,V).\]
However, for this to make sense one must consider continuous cochains $H^p\to \rmH^q_{\rm{cts}}(K,V)$ for the quotient topology on $\rmH^q_{\rm{cts}}(K,V)$, and the calculation works out correctly only if that quotient topology is Hausdorff. For this reason, an affirmative answer to Question~\ref{ques:2} could enlarge the known domain of applicability of this spectral sequence.  This issue is discussed in more detail for continuous cohomology into Fr\'echet modules in~\cite[Chapter IX]{BorWal00}, and for the related setting of measurable cohomology into Polish modules in~\cite{Moo76(gr-cohomIII)}.  It also appears in~\cite{Blanc79}, which uses a comparison between $\rmH^\ast_{\rm{cts}}$ and a kind of $L^p_\rm{loc}$-cohomology to justify some cases of the spectral sequence.

Question~\ref{ques:2} does not fall within the methods of the present paper, because if $V$ is Euclidean but $G$ is non-discrete then the groups of continuous cochains $\C^p_{\rm{cts}}(G,V)$ are not pro-f.-d. in the topology of locally uniform convergence.  If Question~\ref{ques:2} has an affirmative answer, it will presumably require a more specific analysis of the closure properties of the coboundary operators in this setting.  Indeed, the paper~\cite{AusMoo--cohomcty} already contains examples of such arguments: see~\cite[Theorem D]{AusMoo--cohomcty}, which answers both parts of Question~\ref{ques:2} affirmatively in case $G$ is almost connected (that is, its identity component is co-compact).  The arguments there use comparison results between different cohomology theories, the Gleason-Montgomery-Zippin theorem, and the Lyndon-Hochschild-Serre spectral sequence.  I expect that similar tools can answer Question~\ref{ques:2} affirmatively for totally disconnected groups, since these may be written as compact extensions of discrete groups, and we know the desired result for both of those classes separately (by~\cite{AusMoo--cohomcty} and the present paper, respectively). But the fully general case seems more subtle.

\subsection*{Acknowledgement}

I am grateful to David Fisher for helpful discussions and suggestions, and to the anonymous referee for providing some additional context and references.

\bibliographystyle{abbrv}
\bibliography{bibfile}

\parskip 0pt
\parindent 0pt

\vspace{7pt}

\small{\textsc{Courant Institute of Mathematical Sciences, New York University, 251 Mercer St, New York NY 10012, USA}

\vspace{7pt}

Email: \verb|tim@cims.nyu.edu|

URL: \verb|cims.nyu.edu/~tim|}

\end{document}